\newcommand{\envelope}{(\raisebox{-.5pt}{\scalebox{1.45}{\Letter}}\kern-1.7pt)}
\journalname{Numerische Mathematik}
\begin{document}

\title{A uniformly convergent difference scheme on a modified Shishkin mesh for the singular perturbation boundary value problem}

\titlerunning{A uniformly convergent difference scheme on a modified Shishkin mesh}

\author{Enes Duvnjakovi\'c \and Samir Karasulji\'c \and {\\} Vedad Pasic \and
   Helena Zarin}

\institute{E. Duvnjakovi\' c  \and S Karasulji\' c  \and V. Pasic \envelope \at Department of Mathematics, University of Tuzla, Univerzitetska 4, 75000 Tuzla, Bosnia and Herzegovina \\
\email{vedad.pasic@untz.ba} {\\}  ~ {\\}
E. Duvnjakovi\' c \\  \email{enes.duvnjakovic@untz.ba} {\\} ~ {\\}
S. Karasulji\' c \\  \email{samir.karasuljic@untz.ba}  {\\}  ~ {\\}
H. Zarin \at Department of Mathematics and Informatics, University of Novi Sad, Trg Dositeja Obradovi\' ca, 21000 Novi Sad, Serbia \\
\email{helena.zarin@dmi.uns.ac.rs}
}
\date{Received: date / Accepted: date}

\maketitle

\begin{abstract}
In this paper we are considering a semilinear singular perturbation reaction -- diffusion boundary value problem, which contains a small perturbation parameter that acts on the highest order derivative. We construct a difference scheme on an arbitrary nonequidistant mesh using a collocation method and Green's function. We show that the constructed difference scheme has a unique solution and that the scheme is stable. The central result of the paper is $\epsilon$-uniform convergence of almost second order for the discrete approximate solution on a modified Shishkin mesh. We finally provide two numerical examples which illustrate the theoretical results on the uniform accuracy of the discrete problem, as well as the robustness of the method.
\end{abstract}
\keywords{semilinear reaction--diffusion problem \and singular perturbation \and boundary layer \and Shishkin mesh \and layer-adapted mesh \and $\epsilon$-uniform convergence}
\subclass{65L10 \and C \and 65L60}

\section{Introduction}\label{sec1}
We consider the semilinear singularly perturbed problem
\begin{align}
\epsilon^2y''(x)&=f(x,y)\:\quad \textrm{ on }\:\left[ 0,1\right],
\label{uvod1}
\\
y(0)&=0,\ y(1)=0,
\label{uvod2}
\end{align}
where $0<\epsilon<1.$ We assume that the nonlinear function $f$ is continuously differentiable,
i.e. that $ \displaystyle f\in C^{k}\left( [0,1]\times \mathbb{R}\right)$, for  $k\geqslant 2$ and that $f$ has a strictly positive derivative with respect to $y$
\begin{equation}
\dfrac{\partial f}{\partial y}=f_y\geqslant m>0\:\text{ on }\:\left[0,1\right]\times \mathbb{R}\:\: \quad (m=const).
\label{uvod3}
\end{equation}

The solution $y$ of the problem \eqref{uvod1}--\eqref{uvod3} exhibits sharp boundary layers at the endpoints of $\left[0,1\right]$ of $\cal{O} ( \epsilon\ln$ $1/\epsilon)$ width. It is well known that the standard discretization methods for solving \eqref{uvod1}--\eqref{uvod3} are unstable and do not give accurate results when the perturbation parameter $\epsilon$ is smaller than some critical value. With this in mind, we therefore need to develop a method which produces a numerical solution for the starting problem with a satisfactory value of the error. Moreover, we additionally require that the error does not depend on $\epsilon$; in this case we say that the method is uniformly convergent with respect to $\epsilon$ or $\epsilon$-uniformly convergent.

Numerical solutions $\overline{y}$ of given continuous problems obtained using a $\epsilon$-uniformly convergent method satisfy the condition
\[\left\|y-\overline{y}\right\|\leqslant C\kappa(N),\quad\kappa(N)\rightarrow 0,\quad N\rightarrow +\infty,\]
where $y$ is the exact solution of the original continuous problem, $\left\|\cdot\right\|\:$ is the discrete maximum norm, $N$ is the number of mesh points that is independent of $\epsilon$ and $C>0$ is a constant which does not depend on $N$ or $\epsilon$. We therefore demand that the numerical solution $\overline{y}$ converges to $y$ for every value of the perturbation parameter in the domain $0<\epsilon<1$ with respect to the discrete maximum norm $\left\|\cdot\right\|.$

The problem \eqref{uvod1}--\eqref{uvod2} has been researched by many authors with various assumptions on $f(x,y)$. 
Various different difference schemes have been constructed which are uniformly convergent on equidistant meshes as well as schemes on specially constructed, mostly Shishkin and Bakvhvalov-type meshes, where $\epsilon$-uniform convergence of second order has been demonstrated, see e.g. \cite{D4,HSR,SS,V1,SU1}, as well as schemes  with $\epsilon$-uniform convergence of order greater than two, see e.g. \cite{H1,HH,H2,V2,V3}. These difference schemes were usually constructed using the finite difference method and its modifications or collocation methods with polynomial splines. A large number of difference schemes also belongs to the group of exponentially fitted schemes or their uniformly convergent versions. Such schemes were mostly used in numerical solving of corresponding linear singularly perturbed boundary value problems on equidistant meshes, see e.g. \cite{S1,OS,R,S9}. Less frequently were used for numerical solving of nonlinear singularly perturbed boundary value problems, see e.g. \cite{KN,ORS1}.

Our present work represents a synthesis of these two approaches, i.e. we want to construct a difference scheme which belongs to the group of exponentially fitted schemes and apply this scheme to a corresponding nonequidistant layer-adapted mesh. The main motivation for constructing such a scheme is obtaining an $\epsilon$-uniform convergent method, which will be guaranteed by the layer-adapted mesh, and then further improving the numerical results by using an exponentially fitted scheme. We therefore aim to construct an $\epsilon$-uniformly convergent difference scheme on a modified Shishkin mesh, using the results on solving linear boundary value problems obtained by  Roos \cite{R}, O'Riordan and Stynes \cite{OS} and Green's function for a suitable operator.

This paper has the following structure. Section \ref{sec1}. provides background information and introduces the main concepts used throughout. In Section \ref{sec2}. we construct our difference scheme based on which we generate the system of equations whose solving gives us the numerical solution values at the mesh points. We also prove the existence and uniqueness theorem for the numerical solution. In Section \ref{sec3}. we construct the mesh, where we use a modified Shiskin mesh with a smooth enough generating function in order to discretize the initial problem. In Section \ref{sec4}. we show $\epsilon$-uniform convergence and its rate. In Section \ref{sec5}. we provide some numerical experiments and discuss our results and possible future research.

{\bf Notation.} \hspace{0.04cm} Throughout this paper we denote by $C$ (sometimes subscripted) a generic positive constant that may take different values in different formulae, always independent of $N$ and $\epsilon$. We also (realistically) assume that $\epsilon\leqslant \frac{C}{N}$. Throughout the paper, we denote by $\left\|\cdot\right\|$  the usual discrete maximum norm
$\displaystyle
\left\|u\right\|=\displaystyle\max_{0\leqslant i\leqslant N}\left|u_i\right|,\:u\in\mathbb{R}^{N+1},
$ as well as the corresponding matrix norm.
\section{Scheme construction} \label{sec2}
Consider the differential equation  (\ref{uvod1}) in an equivalent form
\begin{equation*}
L_{\epsilon}y(x):=\epsilon^2y''(x)- \gamma y(x)=\psi(x,y(x))\quad\text{on}\quad\left[0,1\right],
\label{konst1}
\end{equation*}
where
\begin{equation}
\psi(x,y)=f(x,y)-\gamma y,
\label{konst2}
\end{equation}
and $\gamma\geqslant m$ is a chosen constant.
In order to obtain a difference scheme needed to calculate the numerical solution of the boundary value problem \eqref{uvod1}--\eqref{uvod2}, using an arbitrary mesh $0=x_0<x_1<x_2<\ldots<x_N=1$ we construct a solution of the following boundary value problem
\begin{align}
\label{konst31}
L_{\epsilon}y_i(x)&=\psi(x,y_i(x))\quad\text{ on }\:\left( x_{i},x_{i+1}\right),\\
y_i(x_i)&=y(x_i),\quad
y_i(x_{i+1})=y(x_{i+1}),
\label{konst32}
\end{align}
for $i=0,1,\ldots,N-1.$
It is clear that $y_i(x)\equiv y(x)\text{ on }\left[x_i,x_{i+1} \right],\ i=0,1,\ldots,N-1.$ The solutions of corresponding homogenous boundary value problems
\begin{align*}
   L_{\epsilon}
        u_i^{I}(x)&:=0\quad\text{on}\quad\left(x_{i},x_{i+1}\right), &  L_{\epsilon}
        u_i^{II}(x)&:=0\quad \text{on}\quad\left(x_{i},x_{i+1}\right),\\
	    u_i^{I}(x_i)&=1,\quad u_i^{I}(x_{i+1})=0, &  u_i^{II}(x_i)&=0,\quad u_i^{II}(x_{i+1})=1,
\end{align*} for $i=0,1,\ldots,N-1$,
are known, see \cite{R}, i.e.
\begin{equation*}
    u_{i}^{I}(x)=\dfrac{\sinh\left(\beta\left(x_{i+1}-x\right)\right)}{\sinh\left(\beta h_i \right)}
     \quad \text{ and }\quad
    u_{i}^{II}(x)=  \dfrac{\sinh\left(\beta\left(x-x_i\right) \right)}{\sinh\left(\beta h_i\right)},
\end{equation*}
for $i=0,1,\ldots,N-1$, where $\displaystyle x\in\left[x_{i},x_{i+1}\right],\:\beta=\frac{\sqrt{\gamma}}{\epsilon},\:h_i=x_{i+1}-x_i.$
The solution of \eqref{konst31}--\eqref{konst32} is given by \[y_i(x)=C_1u_{i}^{I}(x)+C_2u_{i}^{II}(x)+\int_{x_{i}}^{x_{i+1}}{G_i(x,s)\psi(s,y(s))ds},\quad x\in\left[x_{i},x_{i+1}\right],\]
where $G_i(x,s)$ is the Green's function associated with the operator $L_{\epsilon}$ on the interval $\left[x_{i},x_{i+1} \right]$. The function $G_i(x,s)$ in this case has the following form
\begin{equation*}
G_i(x,s)=\dfrac{1}{\epsilon^2w_i(s)}\left\{\begin{array}{c}u_i^{II}(x)u_i^{I}(s),\quad x_i\leqslant x\leqslant s\leqslant x_{i+1},\\  u_i^{I}(x)u_i^{II}(s),\quad x_i\leqslant s\leqslant x\leqslant x_{i+1},	
\end{array} \right.
\end{equation*}
where $w_i(s)=u_i^{II}(s)\left( u_i^{I}\right)'(s)-u_{i}^{I}(s)\left( u_i^{II}\right)'(s).$
Clearly $w_i(s)\neq 0,\ s\in\left[ x_i,x_{i+1}\right]$. It follows from the boundary conditions (\ref{konst32}) that $C_1=y(x_i)=:y_i,$ $C_2=y(x_{i+1})=:y_{i+1},$  $\:i=0,1,\ldots,N-1.$
Hence, the solution $y_i(x)$ of \eqref{konst31}--\eqref{konst32} on $\left[ x_{i},x_{i+1}\right]$ has the following form
\begin{equation}
y_i(x)=y_iu_i^{I}(x)+y_{i+1}u_i^{II}(x)+\int_{x_i}^{x_{i+1}}{G_i(x,s)\psi(s,y(s))ds}.
\label{konst7}
\end{equation}
The boundary value problem
\begin{align*}
L_{\epsilon}y(x):=&\psi(x,y)\qquad\text{on}\quad\left(0,1\right),\\y(0)=& \ y(1)=0,
\end{align*}
has a unique continuously  differentiable solution $y\in C^{k+2}(0,1)$. Since $y_i(x)\equiv y(x)$ on $\left[x_i,x_{i+1} \right]$, $i=0,1,\ldots,N-1$,  we have that
$y'_i(x_i)=y'_{i-1}(x_i)$, for $i=1,2,\ldots,N-1.$
Using this in differentiating (\ref{konst7}), we get that
\begin{align}
\nonumber
y_{i-1}&\left(u_{i-1}^{I}\right)'(x_i)+y_i\left[\left(u_{i-1}^{II}\right)'(x_i) -\left(u_{i}^{I}\right)'(x_i)\right]+y_{i+1}\left[-\left( u_i^{II}\right)'(x_i) \right] & \\
&=\dfrac{\partial}{\partial x}\left[\int_{x_i}^{x_{i+1}}{G_i(x,s)\psi(s,y(s))ds}-\int_{x_{i-1}}^{x_{i}}{G_{i-1}(x,s)\psi(s,y(s))ds} \right]_{x=x_i}. &
\label{konst10}
\end{align}
Since we have that
\begin{eqnarray*}
   \left( u^{I}_{i-i}\right)'(x_i)= \frac{-\beta}{\sinh(\beta h_{i-1})}, \quad  \left( u_i^{II}\right)'(x_i) =\frac{\beta}{\sinh(\beta h_i)},   \\
   \left(u_{i-1}^{II}\right)'(x_i) -\left(u_{i}^{I}\right)'(x_i) =\frac{\beta}{\tanh(\beta h_{i-1})}+\frac{\beta}{\tanh(\beta h_i)} ,
\end{eqnarray*}
equation \eqref{konst10} becomes
\begin{align} \nonumber
\frac{\beta}{\sinh(\beta h_{i-1})}  y_{i-1}&-\left( \frac{\beta}{\tanh(\beta h_{i-1})}+\frac{\beta}{\tanh(\beta h_i)} \right)y_i
+\frac{\beta}{\sinh(\beta h_i)}y_{i+1} & \\
\label{konst13}
      &=\dfrac{1}{\epsilon^2}\left[\int\limits_{x_{i-1}}^{x_{i}}{u_{i-1}^{II}(s)\psi(s,y(s))ds}+\int\limits_{x_{i}}^{x_{i+1}}{u_{i}^{I}(s)\psi(s,y(s))ds}\right],
\end{align}
for $i=1,2,\ldots,N-1$ and $y_0=y_N=0$. We cannot in general explicitly compute the integrals on the RHS of (\ref{konst13}). In order to get a simple enough difference scheme, we approximate the function $\psi$ on $\left[x_{i-1},x_i \right]\cup \left[x_i,x_{i+1} \right]$ using
\begin{equation*}
\overline{\psi}_{i}=\frac{\psi(x_{i-1},\overline{y}_{i-1})+2\psi(x_i,\overline{y}_i)+\psi(x_{i+1},\overline{y}_{i+1})}{4},
\label{konst14}
\end{equation*}
where $\overline{y}_i$ are approximate values of the solution $y$ of the problem (\ref{uvod1})--(\ref{uvod2}) at points $x_i.$
We get that
\begin{flalign*}
&\frac{\beta}{\sinh(\beta h_{i-1})}  \overline{y}_{i-1}
           -\left( \frac{\beta}{\tanh(\beta h_{i-1})}+\frac{\beta}{\tanh(\beta h_i)} \right)\overline{y}_i
               +\frac{\beta}{\sinh(\beta h_i)}\overline{y}_{i+1} &\\
&=\dfrac{1}{\epsilon^2}\frac{\psi(x_{i-1},\overline{y}_{i-1})+2\psi(x_i,\overline{y}_i)+\psi(x_{i+1},\overline{y}_{i+1})}{4} \left[\displaystyle\int\limits_{x_{i-1}}^{x_i}{u_{i-1}^{II}(s)ds}+\displaystyle\int\limits_{x_{i}}^{x_{i+1}}{u_{i}^{I}(s)ds}\right], \\
&=\!\frac{1}{\epsilon^2}\! \frac{\psi(x_{i-1},\overline{y}_{i-1})+2\psi(x_i,\overline{y}_i)+\psi(x_{i+1},\overline{y}_{i+1})}{4} \!\left[\frac{\cosh(\beta h_{i-1})-1}{\beta \sinh(\beta h_{i-1})}
  \! + \! \frac{\cosh(\beta h_{i})-1}{\beta \sinh(\beta h_{i})}\right]\!\!,
\end{flalign*}
for $i=1,2,\ldots,N-1$ and $\overline{y}_0=\overline{y}_N=0$. Using equation \eqref{konst2}, we get that
\begin{align} \nonumber
  (3a_{i}+d_{i}&+\Delta d_{i+1})\left(\overline{y}_{i-1}-\overline{y}_i\right)
     -(3a_{i+1}+d_{i+1}+\Delta d_{i})\left(\overline{y}_i-\overline{y}_{i+1}\right) &\\ \label{konst17}
&-\dfrac{f(x_{i-1},\overline{y}_{i-1})+2f(x_{i},\overline{y}_{i})+f(x_{i+1},\overline{y}_{i+1}) }{\gamma}\left(\Delta d_{i}+\Delta d_{i+1}\right)=0,
\end{align}
for $i=1,2,\ldots,N-1$ and $\overline{y}_0=\overline{y}_N=0$, where
\begin{equation}
 a_i=\frac{1}{\sinh(\beta h_{i-1})},\ d_i=\frac{1}{\tanh(\beta h_{i-1})},\ \Delta d_i=d_i-a_i.
  \label{konst18aa}
\end{equation}
Using the scheme (\ref{konst17}) we form a corresponding discrete analogue of \eqref{uvod1}--\eqref{uvod3}
\begin{flalign}
  F_0\overline{y}&:=\overline{y}_0=0, &  \label{konst18a}\\
  F_i\overline{y}&:=\frac{\gamma}{\Delta d_i+\Delta d_{i+1}}
             \Big[(3a_{i}+d_{i}+\Delta d_{i+1})\left(\overline{y}_{i-1}-\overline{y}_i\right)
               \!- \!(3a_{i+1}+d_{i+1}+\Delta d_{i})\left(\overline{y}_i-\overline{y}_{i+1}\right)& \nonumber \\
                 &\quad \quad -\dfrac{f(x_{i-1},\overline{y}_{i-1})+2f(x_{i},\overline{y}_{i})+f(x_{i+1},\overline{y}_{i+1}) }
                           {\gamma}\left(\Delta d_{i}+\Delta d_{i+1}\right)\Big]=0,& \label{konst18b} \\
  F_N\overline{y}&:=\overline{y}_N=0,
\label{konst18c}
\end{flalign}
where $i=1,2,\ldots,N-1$. The solution $\overline{y}:=\left(\overline{y}_0,\overline{y}_1,\ldots,\overline{y}_N\right)^T$ of the problem \eqref{konst18a}--\eqref{konst18c}, i.e. $F\overline{y}=0,$ where $ F=\left(F_0,F_1,\ldots,F_N\right)^T$ is an approximate solution of the problem \eqref{uvod1}--\eqref{uvod3}.

\begin{theorem} \label{teo21} The discrete problem (\ref{konst18a})--(\ref{konst18c}) has a unique solution $\overline{y}$ for $\gamma\geqslant f_y$. Also, for every $u,v\in\mathbb{R}^{N+1}$ we have the following stabilizing inequality
\begin{equation*}
    \left\|u-v\right\|\leqslant \frac{1}{m}\left\|Fu-Fv\right\|.
\end{equation*}
\end{theorem}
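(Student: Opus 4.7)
The plan is to deduce both claims from a single mean-value linearization of the operator $F$. For each interior index $i$ I would apply the mean value theorem to the three nonlinear terms $f(x_j,\bar y_j)$, $j\in\{i-1,i,i+1\}$, appearing in $F_i\bar y$, and write
\[
F_i u - F_i v \;=\; \alpha_{i-1}(u_{i-1}-v_{i-1}) + \alpha_{i}(u_i-v_i) + \alpha_{i+1}(u_{i+1}-v_{i+1}),
\]
with explicit coefficients $\alpha_{i-1},\alpha_i,\alpha_{i+1}$ involving $a_i,d_i,\Delta d_i$ and the values of $f_y$ at intermediate points $\xi_{i-1},\xi_i,\xi_{i+1}$. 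The boundary rows $F_0,F_N$ are already linear, contributing $(u_0-v_0)$ and $(u_N-v_N)$ verbatim.

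Next I would show that the matrix $A(u,v)$ formed from these coefficients has the sign pattern of an $L$-matrix together with a clean lower bound on its row sums. Using $\Delta d_j=d_j-a_j$ and the hypothesis $\gamma\geq f_y$, a direct computation gives
\[
\alpha_{i-1} \;\geq\; \frac{4\gamma\,a_i}{\Delta d_i+\Delta d_{i+1}} \;>\;0,\qquad \alpha_{i+1} \;\geq\; \frac{4\gamma\,a_{i+1}}{\Delta d_i+\Delta d_{i+1}} \;>\;0,
\]
while $\alpha_i$ is strictly negative. The key observation is that the dominant terms $3a_i+d_i+\Delta d_{i+1}$ and $3a_{i+1}+d_{i+1}+\Delta d_i$ cancel out of $|\alpha_i|-\alpha_{i-1}-\alpha_{i+1}$, leaving the clean identity
\[
|\alpha_i| - \alpha_{i-1} - \alpha_{i+1} \;=\; f_y(x_{i-1},\xi_{i-1}) + 2f_y(x_i,\xi_i) + f_y(x_{i+1},\xi_{i+1}) \;\geq\; 4m.
\]
Producing this cancellation transparently is the main computational hurdle.

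The stability inequality then follows from a standard discrete maximum principle applied to $w:=u-v$. Pick $i^*$ with $|w_{i^*}|=\|w\|$. If $i^*\in\{0,N\}$ the boundary rows give $|w_{i^*}|=|(Fu-Fv)_{i^*}|\leq\|Fu-Fv\|$ directly; otherwise $\alpha_{i^*\pm 1}\geq 0$ and $\alpha_{i^*}<0$, together with the triangle inequality, imply
\[
|\alpha_{i^*}|\,\|w\| \;\leq\; (\alpha_{i^*-1}+\alpha_{i^*+1})\,\|w\| + \|Fu - Fv\|,
\]
and the diagonal-dominance bound just proved yields the sharper estimate $\|u-v\|\leq \tfrac{1}{4m}\|Fu-Fv\|$, from which the claimed inequality is immediate.

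Uniqueness of a zero of $F$ is then immediate from the stability inequality. For existence, the same coefficient calculation shows that at every point the Fr\'echet derivative of $F$ is a nonsingular M-matrix, so $F$ is a local diffeomorphism, while the stability inequality itself provides the coercive bound $\|u\|\leq\tfrac{1}{m}(\|Fu\|+\|F(0)\|)$, making $F$ proper. The Hadamard global inverse function theorem then forces $F:\mathbb{R}^{N+1}\to\mathbb{R}^{N+1}$ to be a homeomorphism, so $F\bar y=0$ admits exactly one solution.
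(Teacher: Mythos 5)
Your proposal is correct and follows essentially the same route as the paper: a mean-value linearization of $F$, verification that the resulting tridiagonal matrix is an $L$-matrix whose row dominance collapses to $f_y(x_{i-1},\xi_{i-1})+2f_y(x_i,\xi_i)+f_y(x_{i+1},\xi_{i+1})\geqslant 4m$ after the cancellation you describe, and Hadamard's theorem for existence. Your discrete-maximum-principle step is just the unfolded form of the paper's bound $\left\|(F')^{-1}\right\|\leqslant 1/m$; the only caveat (shared with the paper) is that at a boundary index the dominance is $1$ rather than $4m$, so the stated constant $1/m$ tacitly assumes $m\leqslant 1$.
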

\begin{proof}
We use a technique from \cite{H2} and \cite{V2}, while the proof of existence of the solution of $F\overline{y}=0$ is based on the proof of the relation: $\|\left(F'\right)^{-1}\|\leqslant C,$ where $F'$ is the Fr\'echet derivative of $F$.
The Fr\'echet derivative $H:=F'(\overline{y})$ is a tridiagonal matrix. Let $H=[h_{ij}].$  The non-zero elements of this tridiagonal matrix are
\begin{flalign*}
h_{0,0}=&h_{N,N}=1,\\
h_{i,i}=&\frac{2\gamma}{\Delta d_i+\Delta d_{i+1}}\!\!\left[ -(a_i+a_{i+1})-2(d_{i}+ d_{i+1})
              -\frac{1}{\gamma}\frac{\partial f}{\partial y}(x_{i},\overline{y}_{i})(\Delta d_i+\Delta d_{i+1}) \right]\!\!<0,\\
h_{i,i-1}=&\frac{\gamma}{\Delta d_i+\Delta d_{i+1}}\left[  \left(\Delta d_i+\Delta d_{i+1}\right)\left(1-\frac{1}{\gamma}\frac{\partial f}{\partial y}(x_{i-1},\overline{y}_{i-1})\right)+4 a_i \right]>0,\\
h_{i,i+1}=&\frac{\gamma}{\Delta d_{i}+\Delta d_{i+1}}\left[  \left(\Delta d_{i+1}+\Delta d_{i}\right)\left(1-\frac{1}{\gamma}\frac{\partial f}{\partial y}(x_{i+1},\overline{y}_{i+1})\right)+4a_{i+1} \right]>0,
\end{flalign*}
where $i=1,\ldots,N-1.$
Hence $H$ is an $L-$ matrix. Moreover, $H$ is an $M-$matrix since
\begin{flalign*}
\nonumber
& \left|h_{i,i}\right|-|h_{i,i-1}|-|h_{i-1,i}| = &  \\
&  \!\frac{\gamma}{\Delta d_i+\Delta d_{i+1}} \!\!
    \left[ \frac{(\Delta d_i+\Delta d_{i+1})\!\!\left( \frac{\partial f}{\partial y}(x_{i-1},\overline{y}_{i-1})
       +2\frac{\partial f}{\partial y}(x_{i},\overline{y}_{i})+\frac{\partial f}{\partial y} (x_{i+1},\overline{y}_{i+1})\right)}{\gamma}  \right] \!\!
   \geqslant 4m. &
\end{flalign*}
Consequently
\begin{equation}
\left\|H^{-1}\right\|\leqslant \dfrac{1}{m}.
\label{egz5}
\end{equation}
Using Hadamard's theorem (see e.g. Theorem 5.3.10 from \cite{OR}), we get that $F$ is an homeomorphism. Since clearly $\mathbb{R}^{N+1}$ is non-empty and $0$ is the only image of the mapping $F$, we have that \eqref{konst18a}--\eqref{konst18c} has a unique solution. \\
The proof of second part of the Theorem \ref{teo21} is based on a part of the proof of Theorem 3 from \cite{H1}. We have that
$\displaystyle  Fu-Fv=\left(F'w\right)(u-v)
$
for some $w=\left(w_0,w_1,\ldots,w_N\right)^T\in\mathbb{R}^{N+1}$. Therefore
$\displaystyle
   u-v=(F'w)^{-1}\left( Fu-Fv\right)$
and finally due to inequality \eqref{egz5} we have that
\begin{equation*}
  \left\| u-v\right\|=\left\|(F'w)^{-1}(Fu-Fv)
        \right\|\leqslant\frac{1}{m}\left\|Fu-Fv\right\|.
\end{equation*} \qed
\end{proof}
\section{Mesh construction} \label{sec3}
Since the solution of the problem \eqref{uvod1}--\eqref{uvod3} changes rapidly near  $x=0$ and $x=1$, the mesh has to be refined there. Various meshes have been proposed by various authors. The most frequently analyzed are the exponentially graded meshes of Bakhvalov, see \cite{B1}, and piecewise uniform meshes of Shishkin, see \cite{SH1}.

Here we use the smoothed Shishkin mesh from \cite{Z1} and we construct it as follows. Let $N+1$ be the number of mesh points and $q\in(0,1/2)$ and $\sigma>0$ are mesh parameters. Define the Shishkin mesh transition point by
\begin{equation*}
\lambda:=\min\left\{\frac{\sigma\epsilon}{\sqrt{m}}\ln N,q\right\}.
\label{mreza1}
\end{equation*}
Let us chose $\sigma=2.$
\begin{remark}
For simplicity in representation, we assume that $\lambda=2\epsilon (\sqrt{m})^{-1}\ln N$, as otherwise the problem can be analyzed in the classical way. We shall also assume that $q N$ is an integer. This is easily achieved by choosing $q=1/4$ and $N$ divisible by 4 for example.
\end{remark}
The mesh $\Delta:x_0<x_1<\cdots<x_N$ is generated by $x_i=\varphi(i/N)$ with the mesh generating function
\begin{equation}
\varphi(t):=\left\{
              \begin{array}{ll}
                 \tfrac{\lambda}{q}t &t\in[0,q],\\
                 p(t-q)^3  +\frac{\lambda}{q}t          &t\in[q,1/2],\\
                 1-\varphi(1-t)         &t\in[1/2,1],
              \end{array}
      \right.
\label{mreza2}
\end{equation}
where $p$ chosen such that $\varphi(1/2)=1/2,$ i.e. $p=\tfrac{1}{2}(1-\tfrac{\lambda}{q})(\tfrac{1}{2}-q)^{-3}.$  \\ Note that $\varphi\in C^{1}[0,1]$ with $\left\|\varphi'\right\|_{\infty},\left\|\varphi''\right\|_{\infty}\leqslant C.$  Therefore we have that the mesh sizes $h_{i}=x_{i+1}-x_{i},\,i=0,1,\ldots,N-1$ satisfy
\begin{eqnarray}
\label{mreza31}
h_i&=&\int_{i/N}^{(i+1)/N}{\varphi'(t) dt}\leqslant C N^{-1}, \\
|h_{i+1}-h_i|&=&\left|\int_{i/N}^{(i+1)/N} \int_{t}^{t+1/N}{\varphi''(s) ds}\right|\leqslant CN^{-2}.
\label{mreza32}
\end{eqnarray}
\section{Uniform convergence}\label{sec4}
In this section we prove the theorem on $\epsilon$-uniform convergence of the discrete problem \eqref{konst18a}--\eqref{konst18c}.
The proof uses the decomposition of the solution $y$ to the problem
(\ref{uvod1})--(\ref{uvod2}) to the layer $s$ and a regular component $r$ given by
\begin{theorem} \cite{V1}
The solution $y$ to problem (\ref{uvod1})--(\ref{uvod2}) can be represented as
\begin{equation*}
   y=r+s,
\end{equation*}
where for $j=0,1,\ldots,k+2$ and $x\in[0,1]$ we have that
\begin{eqnarray}
\left|r^{(j)}(x)\right|&\leqslant& C,
\label{regularna} \\
\left|s^{(j)}(x)\right|&\leqslant& C \epsilon^{-j}\left(e^{-\frac{x}{\epsilon}\sqrt{m}}+e^{-\frac{1-x}{\epsilon}\sqrt{m}}\right).
\label{slojna}
\end{eqnarray}
\label{teorema2}
\end{theorem}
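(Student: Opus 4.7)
The plan is to construct the decomposition $y = r + s$ by building the regular part $r$ as a truncated asymptotic expansion in powers of $\epsilon$, and then analyzing the residual layer component $s = y - r$ via barrier-function techniques. This is the standard Vishik--Lyusternik ansatz adapted to the semilinear reaction--diffusion setting.

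First, I would construct $r$ via a formal expansion $r \sim r_0 + \epsilon r_1 + \epsilon^2 r_2 + \cdots$. Substituting into $\epsilon^2 r'' = f(x,r)$, expanding $f$ in Taylor series around $r_0$, and equating like powers of $\epsilon$: the leading term $r_0$ is the \emph{reduced solution} defined implicitly by $f(x,r_0(x)) = 0$, which exists uniquely and belongs to $C^k[0,1]$ by the implicit function theorem together with the sign condition $f_y \geq m > 0$. The subsequent coefficients $r_j$ are determined by linear algebraic relations of the form $f_y(x,r_0)\, r_j = \Phi_j(x, r_0, r_1, \ldots, r_{j-1})$, where each right-hand side depends on previously computed terms and their derivatives, so each $r_j$ inherits sufficient smoothness. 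Truncating at a sufficiently high order $M$ (at least $k+2$) and setting $r := \sum_{j=0}^{M} \epsilon^j r_j$ yields a function satisfying $\|r^{(j)}\|_\infty \leq C$ for $j = 0, 1, \ldots, k+2$, which establishes \eqref{regularna}. By construction, $r$ also satisfies the original ODE up to a controllable residual: $\epsilon^2 r''(x) - f(x, r(x)) = \rho(x)$ with $|\rho(x)| \leq C \epsilon^{M+1}$.

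Next, I would study $s := y - r$. Subtracting the equations and applying the mean value theorem yields
\begin{equation*}
\epsilon^2 s''(x) - f_y\bigl(x, \eta(x)\bigr)\, s(x) = -\rho(x), \qquad s(0) = -r(0), \quad s(1) = -r(1),
\end{equation*}
for some intermediate value $\eta(x)$ between $r(x)$ and $y(x)$. Since $f_y(x,\eta) \geq m$, this is a linear reaction--diffusion operator to which the maximum principle applies. To obtain the $L^\infty$ estimate in \eqref{slojna} for $j=0$, I would use the barrier function
\begin{equation*}
B(x) = C_0 \Bigl( e^{-x\sqrt{m}/\epsilon} + e^{-(1-x)\sqrt{m}/\epsilon} \Bigr) + C_1 \epsilon^{M+1},
\end{equation*}
verify that $\epsilon^2 B'' - f_y B \leq -|\rho|$ with $B \geq |s|$ at $x=0,1$ for suitable constants, and conclude $|s(x)| \leq B(x)$.

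Finally, to obtain derivative bounds $|s^{(j)}(x)| \leq C\epsilon^{-j}\bigl(e^{-x\sqrt{m}/\epsilon}+e^{-(1-x)\sqrt{m}/\epsilon}\bigr)$ for $j = 1, \ldots, k+2$, I would proceed inductively: the bound on $s''$ follows immediately from the ODE once $s$ itself is controlled, while the first derivative $s'$ is handled via a stretched-variable argument (rescaling $\xi = x/\epsilon$ near each endpoint) or via the Green's function representation for $L_\epsilon$ combined with the already-proven pointwise bound on $s$. Higher derivatives are then obtained by differentiating the ODE $j-2$ times, which produces terms of the form $f_y(x,y) y^{(j)}$ plus lower-order polynomial combinations of earlier derivatives, and applying the barrier-function argument again to each differentiated equation. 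I expect this last step to be the main technical obstacle: one must track carefully how differentiating the nonlinearity $f(x,y(x))$ repeatedly generates products of derivatives of $y$, and ensure that the induction hypothesis controls all of these with the sharp $\epsilon^{-j}$ factors while preserving the exponential decay.
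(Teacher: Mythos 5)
The paper itself gives no proof of this statement---it is quoted from the cited reference of Vulanovi\'c---so the only meaningful comparison is with the standard argument in the literature, which is indeed the Vishik--Lyusternik expansion plus barrier functions that you outline. Your overall strategy is the right one, but there is a concrete gap in how you split $y$. If you set $r:=\sum_{j=0}^{M}\epsilon^{j}r_{j}$ and $s:=y-r$, then $s$ must absorb the residual $\rho$ with $|\rho|\leqslant C\epsilon^{M+1}$, and your barrier function only yields $|s(x)|\leqslant C_{0}\bigl(e^{-x\sqrt{m}/\epsilon}+e^{-(1-x)\sqrt{m}/\epsilon}\bigr)+C_{1}\epsilon^{M+1}$. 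That is strictly weaker than \eqref{slojna}: at $x=1/2$ the right-hand side of \eqref{slojna} is of order $\epsilon^{-j}e^{-\sqrt{m}/(2\epsilon)}$, which is smaller than $\epsilon^{M+1}$ for every fixed $M$ as $\epsilon\rightarrow 0$, so the additive $\epsilon^{M+1}$ term cannot be discarded and the pure exponential bound fails for your $s$ in the interior of $[0,1]$. The standard repair is to arrange the decomposition so that the smooth $O(\epsilon^{M+1})$ remainder lives in $r$ rather than in $s$: for instance, define $r$ as the exact solution of $\epsilon^{2}r''=f(x,r)$ with boundary values taken from the formal expansion at the endpoints, so that $r$ is layer-free; the bound $\|r^{(j)}\|_{\infty}\leqslant C$ then follows from the expansion with $M\geqslant k+1$, since the remainder's $j$-th derivative is $O(\epsilon^{M+1-j})$. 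With this choice $s=y-r$ satisfies the \emph{homogeneous} linearized equation $\epsilon^{2}s''-f_{y}(x,\eta)s=0$ with $O(1)$ boundary data, and the pure exponential barrier closes.

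A second, smaller point: in the inductive step for $j\geqslant 1$ you propose to apply the barrier-function argument to each differentiated equation, but the maximum principle requires boundary values of $s^{(j)}$ at $x=0$ and $x=1$, which are not known a priori. This is usually circumvented either by the stretched-variable argument you mention for $s'$ (applied on overlapping subintervals for every $j$) or by an interpolation inequality of the type $\epsilon\|v'\|_{\infty,I}\leqslant C\bigl(\|v\|_{\infty,I}+\epsilon^{2}\|v''\|_{\infty,I}\bigr)$ on intervals $I$ of length proportional to $\epsilon$; without one of these devices the induction on $j$ does not close.
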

\begin{remark}\label{remark2}
Note that $e^{-\frac{x}{\epsilon}\sqrt{m}}\geqslant e^{-\frac{1-x}{\epsilon}\sqrt{m}}$ for $x\in[0,1/2]$ and $e^{-\frac{x}{\epsilon}\sqrt{m}}\leqslant e^{-\frac{1-x}{\epsilon}\sqrt{m}}$ for $x\in[1/2,1].$ These inequalities and the estimate \eqref{slojna} imply that the analysis of the error value can be done on the part of the mesh which corresponds to  $x\in[0,1/2]$ omitting the function $e^{-\frac{1-x}{\epsilon}\sqrt{m}}$, keeping in mind that on this part of the mesh we have that $h_{i-1}\leqslant h_i.$ An analogous analysis would hold for the part of the mesh which corresponds to $x\in[1/2,1]$ but with the omision of the function $e^{-\frac{x}{\epsilon}\sqrt{m}}$ and using the inequality  $h_{i-1}\geqslant h_i.$
\end{remark}
From here on in we use $\epsilon^2y''(x_k)=f(x_k,y(x_k)),\,k\in\left\{i-1,i,i+1\right\},$ and
\begin{align}
y_{i-1}-y_i=&-y'_ih_{i-1}+\frac{y''_i}{2}h^2_{i-1}-\frac{y'''_i}{6}h^3_{i-1}+\frac{y^{(iv)}(\zeta^{-}_{i-1})}{24}h^4_{i-1}, \label{teo121}\\
y_i-y_{i+1}=&-y'_ih_i-\frac{y''_i}{2}h^2_i-\frac{y'''_i}{6}h^3_i-\frac{y^{(iv)}(\zeta^{+}_i)}{24}h^4_{i},\label{teo122} \\
y''_{i-1}=& \ y''_i-y'''_i h_{i-1}+\frac{y^{(iv)}(\xi^{-}_{i-1})}{2}h^2_{i-1}, \label{teo2-1} \\
y''_{i+1}=&\ y''_i+y'''_ih_i+\frac{y^{(iv)}(\xi^{+}_i)}{2}h^2_i, \label{teo2-2}
\end{align}
where $\zeta^{-}_{i-1},\,\xi^{-}_{i-1}\in(x_{i-1},x_i),\,\zeta^{+}_i,\,\xi^{+}_{i}\in(x_{i},x_{i+1}).$
We begin with a lemma that will be used further on in the proof on the uniform convergence.
\begin{lemma}On the part of the modified Shishkin mesh \eqref{mreza2} where $x_i,x_{i\pm 1}\in\left[x_{N/4-1},\lambda\right]$ $\cup\left[\lambda,1/2\right]$, assuming that  $\displaystyle \epsilon\leqslant\tfrac{C}{N}$, for $i=\frac{N}{4},\ldots,\frac{N}{2}-1$ we have the following estimate
\begin{equation} \label{lemma11}
     \left({\frac{\cosh(\beta h_{i-1})-1}{\gamma\sinh(\beta h_{i-1})}+\frac{\cosh(\beta h_{i})-1}{\gamma\sinh(\beta h_{i})}}\right)^{-1}
      \left|\frac{y_{i-1}-y_i}{\sinh(\beta h_{i-1})}-\frac{y_i-y_{i+1}}{\sinh(\beta h_i)}\right|\leqslant\frac{C}{N^2}.
\end{equation}
\label{lema1}
\end{lemma}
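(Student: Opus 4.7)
The plan is to use the decomposition $y = r + s$ from Theorem~\ref{teorema2} and estimate the contributions of the regular and layer components to the left-hand side of \eqref{lemma11} separately. On the part of the mesh specified in the statement, $x_{i\pm 1} \geq x_{N/4-1}$; a short computation using $\sqrt{m}\lambda/\epsilon = 2\ln N$ and the uniform layer spacing $h_{N/4-1} = 4\lambda/N$ gives $e^{-\sqrt{m}x_{i\pm 1}/\epsilon} \leq CN^{-2}$, and hence by \eqref{slojna}, $|s^{(j)}(x_{i\pm 1})| \leq C\epsilon^{-j}N^{-2}$ for every $j$ we need. It is convenient to rewrite the prefactor via the identity $(\cosh x - 1)/\sinh x = \tanh(x/2)$ as $\gamma/[\tanh(\beta h_{i-1}/2) + \tanh(\beta h_i/2)]$.

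For the regular component I would substitute the Taylor expansions \eqref{teo121}--\eqref{teo122} into $(r_{i-1}-r_i)/\sinh(\beta h_{i-1}) - (r_i-r_{i+1})/\sinh(\beta h_i)$ and use $|r^{(j)}| \leq C$. The first-order terms telescope into $r'_i[g(h_i) - g(h_{i-1})]$ with $g(h) := h/\sinh(\beta h)$; their contribution is $O(\beta h \cdot N^{-2})$ by the mean value theorem together with \eqref{mreza32}, which combined with the prefactor $\sim 1/(\beta h)$ in the small-$\beta h$ regime gives $O(N^{-2})$. The second-order terms combine additively into $(r''_i/2)[h_{i-1}^2/\sinh(\beta h_{i-1}) + h_i^2/\sinh(\beta h_i)]$; for small $\beta h$ this is $\approx r''_i(h_{i-1}+h_i)/(2\beta)$, and multiplying by the prefactor yields exactly $\epsilon^2 r''_i = O(N^{-2})$ via $\beta^2 = \gamma/\epsilon^2$ and $\epsilon \leq C/N$. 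Third- and fourth-order Taylor remainders pick up an extra factor $h \leq C/N$ and fit into the same bound. In the complementary large-$\beta h$ regime, the prefactor is bounded and $g$ and its perturbations are exponentially small, so every estimate is trivial.

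For the layer component I would exploit the structural feature that $e^{\pm \beta x}$ are solutions of $L_\epsilon u = 0$ and that the discrete operator is built from them via the Green's function construction. A direct calculation yields the identity
\[
\frac{e^{-\beta x_{i-1}}-e^{-\beta x_i}}{\sinh(\beta h_{i-1})} - \frac{e^{-\beta x_i}-e^{-\beta x_{i+1}}}{\sinh(\beta h_i)} = e^{-\beta x_i}\bigl[\tanh(\beta h_{i-1}/2) + \tanh(\beta h_i/2)\bigr],
\]
so after multiplication by the prefactor the left-hand side applied to $e^{\pm\beta x}$ equals simply $\gamma$ times the nodal value. Writing $s$ as the exponential of rate $\beta$ that matches $s(x_i)$ plus a residual (or, equivalently, doing a Taylor expansion of $s$ at $x_i$ and collecting the cancellations along the exponential shape), the main layer contribution is $\gamma|s(x_i)| \leq C\gamma N^{-2}$, and the residual is controlled by the Taylor-based bounds used for $r$ combined with the layer-derivative bounds above.

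The main obstacle is precisely the layer contribution: although the nodal values $|s(x_i)|$ are already $O(N^{-2})$, each individual Taylor term $|s^{(j)}(x_i)|$ may be as large as $\epsilon^{-j}N^{-2}$, which blows up when $\epsilon \ll 1/N$. A naïve term-by-term Taylor estimate is therefore insufficient; one must exploit the near-exponential shape of $s$ against the matched $\sinh$- and $\cosh$-weights of the scheme, as encoded by the identity above. Once both components are bounded by $CN^{-2}$, summing the two estimates yields the lemma.
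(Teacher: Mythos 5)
Your treatment of the regular component is sound and follows essentially the same route as the paper: Taylor expansion at $x_i$, telescoping of the first-order terms against the weight $h/\sinh(\beta h)$, the balance $\epsilon^2 r''_i$ for the second-order terms, and the mesh properties \eqref{mreza31}--\eqref{mreza32}. The identity you state for $e^{-\beta x}$ is also correct (both differences reduce to $e^{-\beta x_i}\bigl[\tanh(\beta h_{i-1}/2)+\tanh(\beta h_i/2)\bigr]$), and the observation $|s^{(j)}(x)|\leqslant C\epsilon^{-j}N^{-2}$ for $x\geqslant x_{N/4-1}$ is exactly the input the paper uses.

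The gap is in the layer component, and it sits precisely at the obstacle you yourself identify. Subtracting the matched exponential $s(x_i)e^{-\beta(x-x_i)}$ does not produce a residual with better derivative bounds: the layer function decays at rate $\sqrt{m}/\epsilon$ while $\beta=\sqrt{\gamma}/\epsilon$ with $\gamma\geqslant m$, and \eqref{slojna} is only an upper bound, so generically $\rho'(x_i)=s'(x_i)+\beta s(x_i)$ is still of size $\epsilon^{-1}N^{-2}$ and $|\rho''|$ of size $\epsilon^{-2}N^{-2}$. Consequently ``the Taylor-based bounds used for $r$'' cannot be transplanted: those bounds relied on $|r^{(j)}|\leqslant C$, and applying the same first-order estimate to $\rho$ yields a contribution of order $(h_i-h_{i-1})\,\epsilon^{-1}N^{-2}$, which is unbounded when $\epsilon\ll N^{-2}$ (e.g.\ $h_i-h_{i-1}\sim N^{-2}$ in the coarse region). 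This is exactly the ``na\"ive term-by-term Taylor estimate'' you declare insufficient, so the proposal is circular at this point. The estimate can be closed, but by a different mechanism. The paper's device is to split $\sinh(\beta h)=\beta h+(\sinh(\beta h)-\beta h)$: in the leading part the weights $\beta h_i$ and $\beta h_{i-1}$ make the first-derivative Taylor terms of $s$ cancel identically, leaving only $\epsilon^2\bigl(s''(\mu^-_i)h_{i-1}/h_i+s''(\mu^+_i)\bigr)=O(N^{-2})$; the remainder is handled with no Taylor expansion at all, using the bounded ratios $(\sinh u - u)/(\cosh u -1)\in(0,1)$ and $(\sinh u-u)/\bigl(u(\cosh u-1)\bigr)\in(0,1/3)$ together with the nodal bounds $|s_{i\pm1}-s_i|\leqslant CN^{-2}$. (Alternatively, your direct Taylor route can be repaired by bounding $|h/\sinh(\beta h)|\leqslant 1/\beta$ --- rather than using the mean value theorem --- whenever $\beta h_i\geqslant 1$, so that the first-order layer term is at most $C\gamma\beta^{-1}\epsilon^{-1}N^{-2}=CN^{-2}$; but this case distinction must be made explicit, and your ``exponentially small in the large-$\beta h$ regime'' remark does not cover the mixed case $\beta h_{i-1}\ll 1\leqslant\beta h_i$ that actually occurs at the transition point.)
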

\upshape
\begin{proof}
We are using the decomposition from Theorem \ref{teorema2} and expansions \eqref{teo2-1}, \eqref{teo2-2}. For the regular component $r$ we have that

\begin{flalign}
\nonumber & \left(\frac{\cosh(\beta h_{i-1})-1}{\gamma\sinh(\beta h_{i-1})}+\frac{\cosh(\beta h_{i})-1}{\gamma\sinh(\beta h_{i})}\right)^{-1}
      \left|\frac{r_{i-1}-r_i}{\sinh(\beta h_{i-1})}-\frac{r_i-r_{i+1}}{\sinh(\beta h_i)}\right|  \\
      \nonumber
   & \hspace{0.7cm}\leqslant   \gamma \left|r'_i\frac{\displaystyle \beta h_{i-1}h_i\sum_{n=1}^{+\infty} {\frac{\displaystyle \beta^{2n}(h^{2n}_i-h^{2n}_{i-1})}{(2n+1)!}}}{\displaystyle \sum_{n=1}^{+\infty}\frac{(\beta h_i)^{2n}}{(2n)!}\sinh(\beta h_{i-1})} \right| \nonumber \\
 & \hspace{0.7cm}  +\gamma\left| \frac{\frac{r''(\mu^{+}_{i})}{2}\displaystyle\sum_{n=0}^{+\infty}{\frac{(\beta h_{i-1})^{2n+1} }{(2n+1)!}h^2_i} }
       {(\cosh(\beta h_i)-1)\sinh(\beta h_{i-1})}\right| +
        \gamma \left|\frac{ \frac{r''(\mu^{-}_{i})}{2}\displaystyle\sum_{n=0}^{+\infty}{\frac{(\beta h_{i})^{2n+1} }{(2n+1)!}h^2_{i-1}}}
       {(\cosh(\beta h_i)-1)\sinh(\beta h_{i-1})}\right|. \label{15} &
\end{flalign}
First we want to estimate the expressions containing only the first derivatives in the RHS of inequality (\ref{15}).
From the identity $\displaystyle
 a^n-b^n=(a-b)(a^{n-1}+a^{n-2}b+\ldots+ab^{n-2}+b^{n-1}),\,n\in\mathbb{N},
$
and the inequalities $h_{i-1}\leqslant h_i$, $i=1,\ldots,\frac{N}{2}-1$, we get the inequality
$\displaystyle
   h^n_i-h^n_{i-1}
   \leqslant n(h_i-h_{i-1})h^{n-1}_i$,
 which yields that $\forall n\in\mathbb{N}$,
\begin{equation}
  \frac{\beta^{2n}(h^{2n}_i-h^{2n}_{i-1})}{(2n+1)!}
                      <\frac{\beta^{2n}(h^2_i-h^2_{i-1})h^{2(n-1)}_i}{(2n)!}.
\label{17}
\end{equation}
Using inequality (\ref{17})  together with
(\ref{regularna}), we get that

\begin{equation}
   \gamma \left|r'_i\frac{\beta h_{i-1}h_i\displaystyle\sum_{n=1}^{+\infty}{\frac{\beta^{2n}(h^{2n}_i-h^{2n}_{i-1})}{(2n+1)!}}}
                                     {\displaystyle\sum_{n=1}^{+\infty}\frac{(\beta h_i)^{2n}}{(2n)!}\sinh(\beta h_{i-1})} \right|
          \leqslant C(h_i-h_{i-1}).
\label{18}
\end{equation}
Now we want to estimate the terms containing the second derivatives from the RHS of (\ref{15}). Using inequality (\ref{regularna}), after some simplification, we get that
\begin{equation}
   \left|\frac{ \frac{r''(\mu^{+}_{i})}{2}\displaystyle\sum_{n=0}^{+\infty}{\frac{(\beta h_{i-1})^{2n+1} }{(2n+1)!}h^2_{i}}}
       {(\cosh(\beta h_i)-1)\sinh(\beta h_{i-1})}\right|
    \leqslant\left| \frac{r''(\mu^{+}_{i})h^2_{i}}{\beta^2 h^2_i}\right|    \leqslant C\epsilon^2 ,
\label{20}
\end{equation}
\begin{equation}
    \left| \frac{\frac{r''(\mu^{-}_{i})}{2}\displaystyle\sum_{n=0}^{+\infty}{\frac{(\beta h_{i})^{2n+1} }{(2n+1)!}h^2_{i-1}} }
       {(\cosh(\beta h_i)-1)\sinh(\beta h_{i-1})}\right|
    \leqslant C(\epsilon^2+h_{i-1}h_i)     .
\label{21}
\end{equation}
For the layer component $s$, first we have that
\begin{flalign} \nonumber
&\left({\frac{\cosh(\beta h_{i-1})-1}{\gamma\sinh(\beta h_{i-1})}+\frac{\cosh(\beta h_{i})-1}{\gamma\sinh(\beta h_{i})}}\right)^{-1}
    \left|\frac{s_{i-1}-s_i}{\sinh(\beta h_{i-1})}-\frac{s_i-s_{i+1}}{\sinh(\beta h_i)}\right| & &\\        &  \hspace{1.2cm}\leqslant\gamma\left| \frac{\beta h_{i}(s_{i-1}-s_i)-\beta h_{i-1}(s_{i}-s_{i+1})}
                              {\displaystyle\sum_{n=1}^{+\infty}{\frac{(\beta h_{i})^{2n}}{(2n)!}}\sinh(\beta h_{i-1})}\right| & & \nonumber\\
 & \hspace{1.2cm} +\gamma\left| \frac{\beta h_i\displaystyle\sum_{n=1}^{+\infty}{\frac{(\beta h_{i})^{2n}}{(2n+1)!}}(s_{i-1}-s_i)
                              -\beta h_{i-1}\displaystyle\sum_{n=1}^{+\infty}{\frac{(\beta h_{i-1})^{2n}}{(2n+1)!}}(s_{i}-s_{i+1})}
                              {\displaystyle\sum_{n=1}^{+\infty}{\frac{(\beta h_{i})^{2n}}{(2n)!}}\sinh(\beta h_{i-1})}\right|. & &
\label{22}
\end{flalign}
The first term of the RHS of (\ref{22}) can be bounded by

\begin{flalign} \nonumber
   & \gamma\left|\frac{\beta h_{i}\left[-s'_i h_{i-1}+\frac{s''(\mu^{-}_{i})}{2}h^2_{i-1}\right]
                              -\beta h_{i-1}\left[-\left(s'_i h_i+\frac{s''(\mu^{+}_i)}{2}h^2_i\right)\right]}
                                      {\frac{\beta^2 h^2_i}{2}\beta h_{i-1} }\right|  &  \\
   & \hspace{5cm}\leqslant\epsilon^2\left| s''(\mu^{-}_{i})\frac{h_{i-1}}{h_i}+s''(\mu^{+}_i)\right|
                                              \leqslant \frac{C}{N^2}.
\label{23}
\end{flalign}
For the second term of the RHS of (\ref{22}) we get that
\begin{flalign}\nonumber
   &\gamma\left| \frac{\beta h_i\displaystyle\sum_{n=1}^{+\infty}{\frac{(\beta h_{i})^{2n}}{(2n+1)!}}(s_{i-1}-s_i)
                              -\beta h_{i-1}\displaystyle\sum_{n=1}^{+\infty}{\frac{(\beta h_{i-1})^{2n}}{(2n+1)!}}(s_{i}-s_{i+1})}
                              {\displaystyle\sum_{n=1}^{+\infty}{\frac{(\beta h_{i})^{2n}}{(2n)!}}\sinh(\beta h_{i-1})}\right|  \\
  &\leqslant\gamma\ \frac{\beta h_i}{\sinh(\beta h_{i-1})}
         \frac{\displaystyle\sum_{n=1}^{+\infty}{\frac{(\beta h_{i})^{2n}}{(2n+1)!}}}
             {\displaystyle\sum_{n=1}^{+\infty}{\frac{(\beta h_{i})^{2n}}{(2n)!}}}   |s_{i-1}-s_i|
         + \gamma\ \frac{\beta h_{i-1}}{\beta h_{i-1}}
        \frac{\displaystyle\sum_{n=1}^{+\infty}{\frac{(\beta h_{i-1})^{2n}}{(2n+1)!}}}
             {\displaystyle\sum_{n=1}^{+\infty}{\frac{(\beta h_{i})^{2n}}{(2n)!}}}   |s_{i}-s_{i+1}|,
\label{24}
\end{flalign}
\begin{equation}
  \begin{split}
   \gamma \dfrac{\sum_{n=1}^{+\infty}{\dfrac{(\beta h_{i-1})^{2n}}{(2n+1)!}}}
             {\sum_{n=1}^{+\infty}{\dfrac{(\beta h_{i})^{2n}}{(2n)!}}}   |s_{i}-s_{i+1}|\leqslant \dfrac{C}{N^2}.
  \end{split}
\label{24a}
\end{equation}
In the first expression  of the RHS of (\ref{24})
we have the term $\displaystyle \frac{\beta h_i}{\sinh(\beta h_{i-1})}.$ Although this ratio is bounded by $\displaystyle \frac{h_i}{h_{i-1}}$, this quotient is not bounded for $x_{i}=\lambda$ when $\epsilon\rightarrow 0.$ This is why we are going to estimate this expression separately on the transition part and on the nonequidistant part of the mesh. In the case $i=\frac{N}{4}$, using the fact that
 $\sum_{n=1}^{+\infty}{\frac{x^{2n}}{(2n)!}}=\cosh x-1$ and $\sum_{n=1}^{+\infty}{\frac{x^{2n+1}}{(2n+1)!}}=\sinh x-x,\,\forall x\in\mathbb{R}$
and the fact that the function $ r(x)=\frac{\sinh x-x}{\cosh x-1}$ takes values from the interval $(0,1)$ when $x>0$, we have that
\begin{equation}
   \gamma\left| \frac{\beta h_i\displaystyle\sum_{n=1}^{+\infty}{\frac{(\beta h_{i})^{2n}}{(2n+1)!}}(s_{i-1}-s_i) }
                              {\displaystyle\sum_{n=1}^{+\infty}{\frac{(\beta h_{i})^{2n}}{(2n)!}}\sinh(\beta h_{i-1})}\right|
      \leqslant  \gamma\frac{|s_{i-1}-s_i|}{\sinh(\beta h_{i-1})}
            \leqslant \frac{C}{N^2}.
\label{26}
\end{equation}
When $i=\frac{N}{4}+1,\ldots,\frac{N}{2}-1$, we can use
  $ \tfrac{\sum_{n=1}^{+\infty}{\tfrac{x^{2n}}{(2n+1)!}}}{\sum_{n=1}^{+\infty}{\tfrac{x^{2n}}{(2n)!}}}=
                                                            \tfrac{\sinh x-x}{x(\cosh x-1 )}=p(x)$ and   \\ $0<p(x)<\frac{1}{3}$ for $x>0.$
Therefore
\begin{equation}
    \gamma\left| \frac{\beta h_i\displaystyle\sum_{n=1}^{+\infty}{\frac{(\beta h_{i})^{2n}}{(2n+1)!}}(s_{i-1}-s_i) }
                              {\displaystyle\sum_{n=1}^{+\infty}{\frac{(\beta h_{i})^{2n}}{(2n)!}}\sinh(\beta h_{i-1})}\right|
   \leqslant \gamma   \frac{\beta h_{i}}{\beta h_{i-1}}\frac{\displaystyle\sum_{n=1}^{+\infty}{\frac{(\beta h_i)^{2n}}{(2n+1)!}}}
                                                           {\displaystyle\sum_{n=1}^{+\infty}{\frac{(\beta h_i)^{2n}}{(2n)!}}}
                                                          |s_{i-1}-s_i|                      \leqslant\frac{C}{N^2}.
\label{27}
\end{equation}
Using equations (\ref{mreza31}), (\ref{mreza32}) and (\ref{18})--(\ref{27}), we complete the proof of the lemma. \qed
\end{proof}

Now we state the main theorem on $\epsilon-$uniform convergence of our difference scheme and specially chosen layer-adapted mesh.
\begin{theorem} The discrete problem \eqref{konst18a}--\eqref{konst18c} on the mesh from Section \ref{sec2}. is uniformly convergent with respect to $\epsilon$  and
\begin{equation*}
\max_{i}\left|y_i-\overline{y}_i\right|\leq C\left\{ \begin{array}{l}
\dfrac{\ln^2 N}{N^2},\:i=0,\ldots,\tfrac{N}{4}-1\vspace{.25cm}\\
\dfrac{1}{N^2},\:i=\tfrac{N}{4},\ldots,\tfrac{3N}{4}\vspace{.25cm} \\
\dfrac{\ln^2 N}{N^2},\:i=\tfrac{3N}{4}+1,\ldots,N,
\end{array}\right.
\end{equation*}
where $y$ is the solution of the problem (\ref{uvod1}), $\overline{y}$ is the corresponding numerical solution of (\ref{konst18a})--\eqref{konst18c} and $C>0$ is a constant independent of $N$ and $\epsilon$.
\end{theorem}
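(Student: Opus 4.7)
My starting point is the stability inequality from Theorem~\ref{teo21}: since $F\overline{y}=0$, setting $u=y$, $v=\overline{y}$ gives $\|y-\overline{y}\|\leqslant m^{-1}\|Fy\|$, so the work reduces to estimating the truncation residual $F_iy$ at each interior node. Using $f(x_k,y(x_k))=\epsilon^{2}y''(x_k)$ rewrites $F_iy$ as a linear combination of $y_{i-1},y_i,y_{i+1}$ and $y''_{i-1},y''_i,y''_{i+1}$, so the decomposition $y=r+s$ from Theorem~\ref{teorema2} splits the task into $F_ir+F_is$ and lets the regular and layer components be analysed separately. By Remark~\ref{remark2} I restrict attention to $i\in\{1,\ldots,N/2\}$, and the mesh of Section~\ref{sec3} naturally partitions this range into the fine uniform part $i=1,\ldots,N/4-1$ with $h_{i-1}=h_i=4\lambda/N=O(\epsilon N^{-1}\ln N)$, and the transition/coarse range $i=N/4,\ldots,N/2-1$.

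On the transition/coarse range, Lemma~\ref{lema1} already delivers a $CN^{-2}$ bound for the exponentially fitted principal part of $F_iy$, namely $T_i=\frac{4\gamma}{\Delta d_i+\Delta d_{i+1}}\bigl[a_i(y_{i-1}-y_i)-a_{i+1}(y_i-y_{i+1})\bigr]$; the remaining term $\gamma(y_{i-1}-2y_i+y_{i+1})-\epsilon^{2}(y''_{i-1}+2y''_i+y''_{i+1})$ I would estimate by combining the Taylor expansions \eqref{teo121}--\eqref{teo2-2} with the mesh-smoothness bound \eqref{mreza32}, which prevents any first-order-in-$h$ obstruction. For the layer component this part of the mesh lies past the transition point, so the decay $e^{-\sqrt{m}\,x_i/\epsilon}\leqslant e^{-\sqrt{m}\,\lambda/\epsilon}=N^{-2}$ absorbs every negative power of $\epsilon$ coming from \eqref{slojna}, yielding $|F_is|\leqslant CN^{-2}$ automatically.

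On the fine uniform sub-mesh the layer analysis is the delicate step: $|s^{(j)}(x_i)|\leqslant C\epsilon^{-j}$ together with $h^{j}=O((\epsilon\ln N/N)^{j})$ only formally furnishes a factor $(\ln N/N)^{j}$, so I must confirm that the exponentially fitted coefficients $a_i,d_i,\Delta d_i$ do not spoil this. Fortunately $\beta h=O(\ln N/N)$ is small on this part of the mesh, so the argument mirrors Lemma~\ref{lema1}: expand $\sinh(\beta h)$ and $\cosh(\beta h)$ in power series, insert the Taylor expansions of $y_{i\pm 1}$ and $y''_{i\pm 1}$, and verify that the cross-cancellation forced by $\epsilon^{2}y''=f$ leaves a net residual of size $C\ln^{2}N/N^{2}$; the regular part is immediate from bounded derivatives and $h_i\leqslant CN^{-1}$. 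The right half $i\in\{N/2+1,\ldots,N-1\}$ follows by the symmetric analysis of Remark~\ref{remark2}. To sharpen the uniform estimate $\|y-\overline{y}\|\leqslant m^{-1}\max_i|F_iy|$ into the region-dependent pointwise bound stated in the theorem, I would combine the truncation estimates with a discrete barrier argument based on the $M$-matrix property of $H$ already established in the proof of Theorem~\ref{teo21}, using a barrier of the form $CN^{-2}+C\ln^{2}N\,N^{-2}(e^{-\sqrt{m}\,x_i/\epsilon}+e^{-\sqrt{m}\,(1-x_i)/\epsilon})$ whose magnitude is tuned to the region-wise truncation bounds. The chief obstacle I anticipate is precisely the fine-mesh layer estimate, where the unbounded derivatives of $s$ interact non-trivially with the $\sinh$-based coefficients; the saving grace is the smallness of $\beta h$ there, which lets the transcendental functions be replaced by a few Taylor terms at a cost of genuinely higher order, exactly as in Lemma~\ref{lema1}.
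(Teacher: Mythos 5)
Your proposal follows the same overall strategy as the paper's proof: reduce the error to the consistency error via the stability inequality of Theorem~\ref{teo21} (since $F\overline{y}=0$), then bound $F_iy$ piecewise --- on the fine sub-mesh by expanding the $\sinh/\cosh$ coefficients in power series and inserting the Taylor expansions \eqref{teo121}--\eqref{teo2-2}, which yields $C\ln^2N/N^2$; on the transition/coarse range by splitting $F_iy$ into $\gamma\left(y_{i-1}-2y_i+y_{i+1}\right)$, the fitted cross term $\frac{4\gamma}{\Delta d_i+\Delta d_{i+1}}\left|a_i(y_{i-1}-y_i)-a_{i+1}(y_i-y_{i+1})\right|$ handled by Lemma~\ref{lema1}, and the term $\epsilon^2\left|y''_{i-1}+2y''_i+y''_{i+1}\right|$, all of which give $C/N^2$. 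Two differences are worth recording. First, a cosmetic one: the paper does not decompose $y=r+s$ on the fine sub-mesh but works with $y$ directly, invoking Theorem~\ref{teorema2} only inside Lemma~\ref{lema1} and in the coarse-range estimates; your systematic splitting changes nothing essential. Second, a substantive one: the paper stops at the global bound $\left\|y-\overline{y}\right\|\leqslant m^{-1}\max_i\left|F_iy\right|\leqslant C\ln^2N/N^2$ and performs no localization, whereas you correctly note that the sharper interior bound $C/N^2$ for $i=N/4,\ldots,3N/4$ does not follow from the plain stability inequality and propose a discrete barrier built on the $M$-matrix property of $H$. That added step is precisely what a literal reading of the region-dependent statement requires; if you carry it out, the one piece of your outline not already reducible to computations in the paper is the verification that $H$ applied to your exponential barrier dominates the truncation error inside the layer regions.
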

\begin{proof}
We shall use the technique from \cite{V2}, i.e. since we have stability from Theorem \ref{teo21}, we have that $\left\|y-\overline{y}\right\|\leqslant C\left\|Fy-F\overline{y}\right\|$ and since \eqref{konst18a}--\eqref{konst18c} implies that  $F\overline{y}=0$, it only remains to estimate $\left\|Fy\right\|$.

Let  $i=0,1,\ldots,\frac{N}{4}-1$. The discrete problem (\ref{konst18a})--\eqref{konst18c} can be written down on this part of the mesh in the following form
\begin{flalign*}
  F_0y=& \ 0, & &\\
   F_iy
      =&\frac{\gamma}{\Delta d_i+\Delta d_{i+1}}{\Big[}\left(3a_i+d_i+\Delta d_{i+1} \right)(y_{i-1}-y_i)
                    -\left[3a_{i+1}+d_{i+1}+\Delta d_{i} \right](y_{i}-y_{i+1})\\
       &\hspace{2cm}   \left. -\frac{f(x_{i-1},y_{i-1})+2f(x_{i},y_{i})+f(x_{i+1},y_{i+1}) }{\gamma}
                                        \left(\Delta d_{i}+\Delta d_{i+1}\right)\right]\\
      =&\frac{\gamma}{2\Delta d_i}\Big[   \left(3a_i+d_i+\Delta d_i \right)\left(y_{i-1}-y_i-(y_i-y_{i+1})\right) \\
      &\hspace{2cm}   \left. -2\frac{f(x_{i-1},y_{i-1})+2f(x_{i},y_{i})+f(x_{i+1},y_{i+1}) }{\gamma}\Delta d_{i}     \right] \\
      =&\frac{\gamma}{2(\cosh(\beta h_i)-1)} {\big[} \left(2+2\cosh(\beta h_i)\right)\left(y_{i-1}-y_i-(y_i-y_{i+1})\right) \\
     & \hspace{2cm} \left.
             -2\frac{f(x_{i-1},y_{i-1})+2f(x_{i},y_{i})+f(x_{i+1},y_{i+1})}{\gamma}(\cosh(\beta h_i)-1)\right],
\end{flalign*}
for $i=1,2,\ldots,\frac{N}{4}-1$. Using the expansions \eqref{teo121} and \eqref{teo122}, we get that
\begin{flalign*}
   F_iy
       =&\frac{\gamma}{\beta^2h^2_i+2\mathcal{O}\left(\beta^4 h^4_i\right)} \\
       & \hspace{.75cm}
        \left[ \left(4+\beta^2h^2_i+2\mathcal{O}\left( \beta^4h^4_i\right) \right)
                    \left(y''_ih^2_i+\frac{y^{(iv)}\left(\zeta^{-}_{i-1}\right)
                    +y^{(iv)}\left(\zeta^{+}_i\right)}{24}h^4_i \right)\right. &\\
        &\hspace{.75cm}-\frac{1}{\beta^2}\left.\left(4y''_i
                               +\frac{y^{(iv)}\left(\xi^{-}_{i-1}\right)+y^{(iv)}\left(\xi^{+}_i\right)}{2}h^2_i \right)
              \left( \beta^2h^2_i+2\mathcal{O}\left(\beta^4h^4_i \right) \right)    \right]
                      \end{flalign*}
              \begin{flalign*}
       =&\frac{\gamma}{\beta^2h^2_i+2\mathcal{O}\left(\beta^4 h^4_i\right)}
        \left[ 4\frac{y^{(iv)}\left(\zeta^{-}_{i-1}\right)+y^{(iv)}\left(\zeta^{+}_i\right)}{24}h^4_i \right. \\
        +& \left.
          \left(\beta^2h^2_i+2\mathcal{O}\left(\beta^4h^4_i \right) \right)
           \left( y''_ih^2_i+\frac{y^{(iv)}\left(\zeta^{-}_{i-1}\right)+y^{(iv)}\left(\zeta^{+}_i\right)}{24}h^4_i\right)
           -\frac{8}{\gamma}\epsilon^2y''_i\mathcal{O}\left( \beta^4h^4_i\right)\right.\\
       -&\left.
                      \frac{\epsilon^2\left(y^{(iv)}\left(\xi^{-}_{i-1}\right)+y^{(iv)}\left(\xi^{+}_i\right) \right)}{2\gamma}  h^2_i
                      \left[\beta^2h^2_i+2\mathcal{O}\left( \beta^4h^4_i\right) \right] \right],
\end{flalign*}
for $i=1,\ldots,\frac{N}{4}-1$ and hence
$\displaystyle
   \left|F_iy \right|\leqslant\frac{C\ln^2N}{N^2},$
for $i=0,1,\ldots,\frac{N}{4}-1$. \\
Now let $i=\frac{N}{4},\ldots,\frac{N}{2}-1.$  We rewrite equations (\ref{konst18a})--\eqref{konst18c} as
\begin{flalign*}
   F_iy=&\frac{\gamma}{\Delta d_i+\Delta d_{i+1}}
                {\big[} (\Delta d_i+\Delta d_{i+1})\left(y_{i-1}-y_i-(y_i-y_{i+1}) \right) & \\
                &\hspace{2cm}+4\left( a_i(y_{i-1}-y_i)-a_{i+1}(y_i-y_{i+1})\right) \\
                &\hspace{2cm}- \left.\frac{f(x_{i-1},y_{i-1})+2f(x_{i},y_{i})+f(x_{i+1},y_{i+1})}{\gamma}(\Delta d_i+\Delta d_{i+1})\right].
\end{flalign*}
We estimate the linear and the nonlinear term separately. For the nonlinear term we get
\begin{flalign*}
& \frac{\gamma}{\Delta d_i+\Delta d_{i+1}}
 \left| \frac{f(x_{i-1},y_{i-1})+2f(x_{i},y_{i})+f(x_{i+1},y_{i+1}) }{\gamma}
       \left(\Delta d_{i}+\Delta d_{i+1}\right)\right| & &\\
 & \hspace{1cm}= \left|f(x_{i-1},y_{i-1})+2f(x_{i},y_{i})+f(x_{i+1},y_{i+1})\right|=\epsilon^2\left|y''_{i-1}+2y''_{i}+y''_{i+1}\right| \leqslant\frac{C}{N^2}.
\end{flalign*}
For the linear term we get
\begin{flalign} \nonumber
&  \frac{\gamma}{\Delta d_i\!+\!\Delta d _{i+1}}
  \Big|\! (\Delta d_i+\Delta d_{i+1})\!\left[y_{i-1}-y_i-y_i+y_{i+1}) \right]\!+\!4\left[ a_i(y_{i-1}-y_i)\!-\!a_{i+1}(y_i-y_{i+1})\right]\!\!\Big|\\
& \hspace{0.3cm}             \leqslant \gamma \left| y_{i-1}-y_i-(y_i-y_{i+1})\right|+  \frac{4\gamma}{\Delta d_i+\Delta d _{i+1}}\left|a_i(y_{i-1}-y_i)-a_{i+1}(y_i-y_{i+1}) \right|.
\label{teo8}
\end{flalign}
For the first term in the RHS of (\ref{teo8}) we get
\begin{equation*}
  \begin{split}
   \gamma \left| y_{i-1}-y_i-(y_i-y_{i+1})\right|\leqslant\frac{C}{N^2},
  \end{split}
\end{equation*}
while for the second term in the RHS of (\ref{teo8}), using \eqref{lemma11} and \eqref{konst18aa}, we get that
\begin{equation*}
  \begin{split}
   \frac{4\gamma}{\Delta d_i+\Delta d _{i+1}}\left|a_i(y_{i-1}-y_i)-a_{i+1}(y_i-y_{i+1}) \right|\leqslant\frac{C}{N^2}.
  \end{split}
\end{equation*}
Hence, we get that
$\displaystyle
  \left|F_iy\right|\leqslant\frac{C}{N^2}
$
for $i=\frac{N}{4},\ldots,\frac{N}{2}-1$. \\
The proof for $i=\frac{3N}{4}+1,\ldots,N$ is analogous to the case $i=0,1,\ldots,\frac{N}{4}-1$ and the proof for $i=\frac{N}{2}+1,\ldots, \frac{3N}{4}$ is analogous to the case $i=\frac{N}{4},\frac{N}{2}-1$
in view of Remark \ref{remark2} and Lemma \ref{lema1}. Finally, the case  $i=\frac{N}{2}$ is simply shown since $h_{N/2-1}=h_{N/2}$, $e^{-\frac{x}{\epsilon}\sqrt{m}}<<\epsilon^2$ and  $e^{-\frac{1-x}{\epsilon}\sqrt{m}}<<\epsilon^2$ for $x\in[x_{N/2-1},x_{N/2+1}]$.
 \qed
\end{proof}
\section{Numerical results}\label{sec5}
In this section we present numerical results to confirm the uniform accuracy of the discrete problem (\ref{konst18a})--\eqref{konst18c}. To demonstrate the efficiency of the method, we present two examples having boundary layers. The problems from our examples have known exact solutions, so we calculate $E_N$ as
\begin{equation}
  E_N=\max_{0\leqslant i\leqslant N}\left|y(x_i)-\overline{y}^{N}(x_i) \right|,
\label{num1}
\end{equation}
where $\overline{y}^N(x_i)$ is the value of the numerical solutions at the mesh point $x_i$, where the mesh has $N$ subintervals, and $y(x_i)$ is the value of the exact solution at $x_i$. The rate of convergence  Ord is calculated using
\begin{equation*}
 \text{Ord}=\frac{\ln E_N-\ln E_{2N}}{\ln 	\frac{2k}{k+1}},
\end{equation*}
where $N=2^{k},\:k=6,7,\ldots,13.$  Tables 1 and 2 give the numerical results for our two examples and we can see that the theoretical and experimental results match.

\begin{example} \upshape Consider the following problem, see \cite{H2}
\begin{equation*}
 \epsilon^2y''=y+\cos^2(\pi x)+2(\epsilon \pi)^2\cos(2\pi x)\quad\text{for }x\in(0,1),\quad y(0)=y(1)=0.
\end{equation*}
The exact solution of this problem is given by
$\displaystyle
  y(x)=\frac{e^{-\frac{x}{\epsilon}}+e^{-\frac{1-x}{\epsilon}}}{1+e^{-\frac{1}{\epsilon}}}-\cos^2(\pi x).
$
\label{primjer1}
The nonlinear system was solved using the initial condition $y_0=-0.5$ and the value of the constant  $\gamma=1$.
\begin{center}
\begin{figure}[!h]
 \includegraphics[scale=.45]{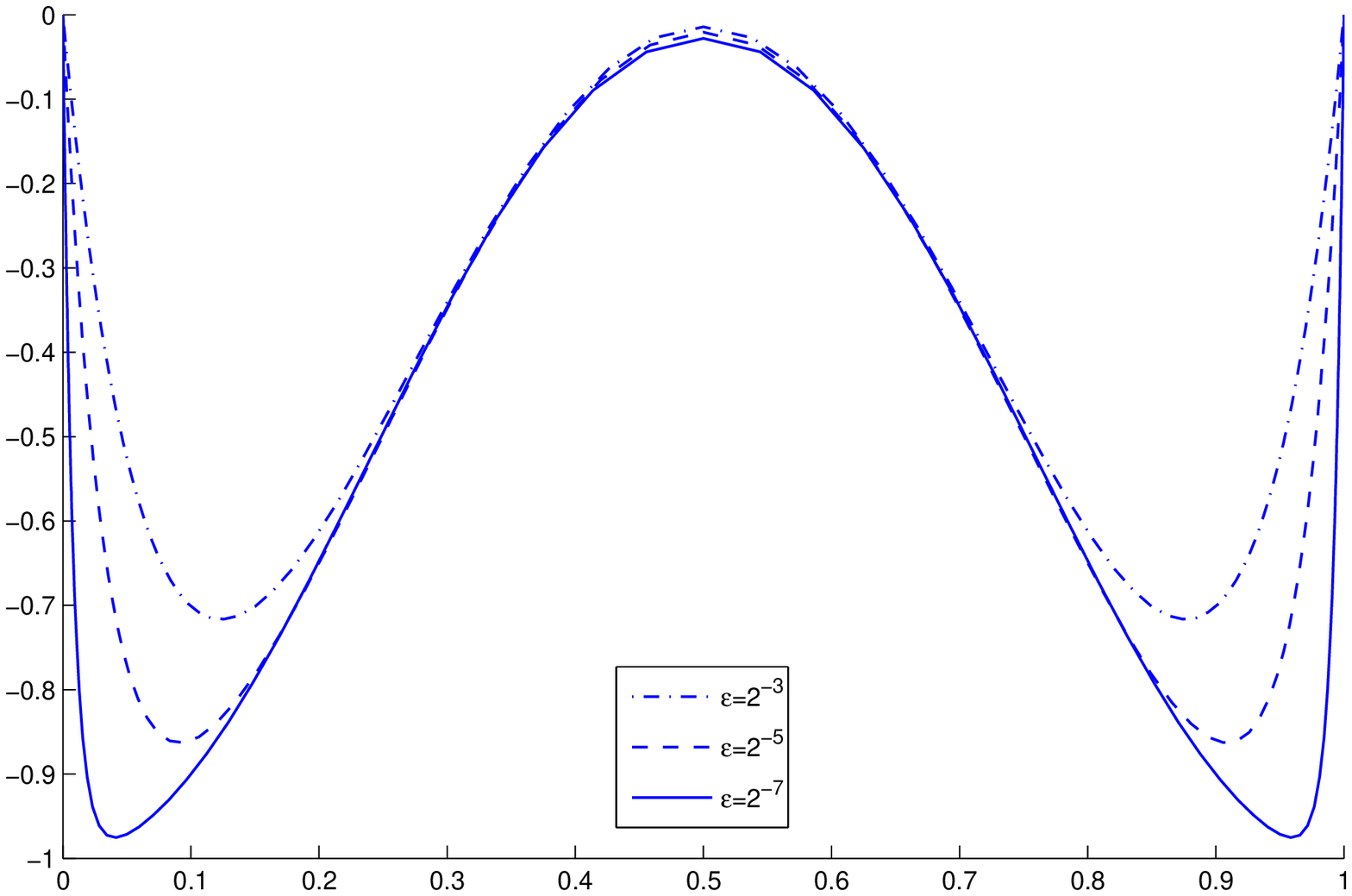}
 \caption{Numerical solution graphs from example \ref{primjer1} for values $\epsilon= 2^{-3},\,2^{-5},2^{-7}$.}
\end{figure}
\end{center}

\begin{table}[!h]\tiny
\centering
\begin{tabular}{c|cc|cc|cc|cc|cc}\hline
     $N$ &$E_n$&Ord&$E_n$&Ord&$E_n$&Ord&$E_n$&Ord&$E_n$&Ord\\\hline
$2^{6}$&$1.0212e-03$&$2.61$   &$2.8612e-03$&$2.01$ &$3.1123e-03$&$2.08$   &$4.3466e-03$&$2.08$  &$4.6523e-03$&$2.08$         \\
$2^{7}$&$2.5012e-04$&$2.23$   &$9.6837e-04$&$2.11$ &$1.0144e-03$&$2.07$   &$1.4166e-03$&$2.06$  &$1.5163e-03$&$2.04$         \\
$2^{8}$&$7.1810e-05$&$2.01$   &$2.9732e-04$&$2.09$ &$3.1849e-04$&$2.04$   &$4.4730e-04$&$2.05$  &$4.7876e-04$&$2.05$         \\
$2^{9}$&$2.2591e-05$&$2.03$   &$8.9328e-05$&$2.00$ &$9.8480e-05$&$2.00$   &$1.3752e-04$&$2.00$  &$1.4719e-04$&$2.02$         \\
$2^{10}$&$6.8505e-06$&$2.00$  &$2.7570e-05$&$2.00$ &$3.0395e-05$&$2.00$   &$4.2443e-05$&$2.00$  &$4.5428e-05$&$2.00$        \\
$2^{11}$&$2.0723e-06$&$2.00$  &$8.3400e-06$&$2.00$ &$9.1945e-06$&$2.00$   &$1.2839e-05$&$2.00$  &$1.3742e-05$&$2.00$         \\
$2^{12}$&$6.1654e-07 $&$2.00$ &$2.4813e-06 $&$2.00$ &$2.7356e-06$&$2.00$  &$3.8197e-06$&$2.00$  &$4.0885e-06$&$2.00$        \\
$2^{13}$&$1.8090e-07 $&$-$    &$7.2803e-07 $&$-$   &$8.0262e-07$&$-$      &$1.1208e-06$&$-$     &$1.1996e-06$&$-$         \\\hline
 $\epsilon$&\multicolumn{2}{c}{$2^{-3}$}&\multicolumn{2}{c}{$2^{-5}$}
            &\multicolumn{2}{c}{$2^{-7}$}&\multicolumn{2}{c}{$2^{-10}$}
             &\multicolumn{2}{c}{$2^{-15}$}\\\hline\hline
     $N$ &$E_n$&Ord&$E_n$&Ord&$E_n$&Ord&$E_n$&Ord&$E_n$&Ord\\\hline
$2^{6}$  &$4.6579e-03$&$2.08$  &$4.6579e-03$&$2.08$  & $4.6579e-03$&$2.08$  &$4.6579e-03$&$2.08$     &$4.6796e-03 $&$2.06 $   \\
$2^{7}$  &$1.5181e-03$&$2.04$  &$1.5181e-03$&$2.04$  & $1.5181e-03$&$2.04$  &$1.5181e-03$&$2.04$     &$1.5417e-03 $&$2.02 $   \\
$2^{8}$  &$4.7934e-04$&$2.05$  &$4.7934e-04$&$2.05$  & $4.7934e-04$&$2.05$  &$4.7934e-04$&$2.05$     &$4.9781e-03 $&$2.03 $   \\
$2^{9}$  &$1.4736e-04$&$2.02$  &$1.4736e-04$&$2.02$  &$1.4736e-04$&$2.02$   &$1.4736e-04$&$2.02$     &$1.5481e-04 $&$2.00 $   \\
$2^{10}$ &$4.5483e-05$&$2.00$  &$4.5483e-05$&$2.00$  &$4.5483e-05$&$2.00$   &$4.5483e-05$&$2.00$     &$4.7782e-05 $&$2.00 $   \\
$2^{11}$ &$1.3758e-05$&$2.00$  &$1.3758e-05$&$2.00$  &$1.3758e-05$&$2.00$   &$1.3758e-05$&$2.00$     &$1.4454e-05 $&$2.00 $   \\
$2^{12}$ &$4.0934e-06$&$2.00$  &$4.0934e-06$&$2.00$  &$4.0934e-06$&$2.00$   &$4.0934e-06$&$2.00$     &$4.3004e-06 $&$2.00 $   \\
$2^{13}$ &$1.2010e-06$&$-$     &$1.2010e-06$&$-$     &$1.2010e-06$&$-$      &$1.2010e-06$&$-$        &$1.2617e-06$&$-$      \\\hline
 $\epsilon$&\multicolumn{2}{c}{$2^{-25}$}
            &\multicolumn{2}{c}{$2^{-30}$}&\multicolumn{2}{c}{$2^{-35}$}
             &\multicolumn{2}{c}{$2^{-40}$}&\multicolumn{2}{c}{$2^{-45}$}\\\hline
\end{tabular}
\caption{Error $E_N$ and convergence rates Ord for approximate solution for example \ref{primjer1}.}
\end{table}
\end{example}
\newpage
\begin{example} \upshape Consider the following problem
\begin{equation*}
  \epsilon^2 y''=(y-1)(1+(y-1)^2)+g(x)\quad \text{for }x\in(0,1),\quad y(0)=(1)=0,
\end{equation*}
where
$\displaystyle
   g(x)=\frac{\cosh^3\frac{1-2x}{2\epsilon^3}}{\cosh^3\frac{1}{2\epsilon}} .
$
The exact solution of this problem is given by
$\displaystyle
  y(x)=1-\frac{e^{-\frac{x}{\epsilon}}+e^{-\frac{1-x}{\epsilon}}}{1+e^{-\frac{1}{\epsilon}}}.
$
\label{primjer2}
The nonlinear system was solved using the initial guess $y_0=1$. The exact solution implies that $0\leqslant y\leqslant 1,\:\forall x\in[0,1],$ so the value of the constant $\gamma=4$ was chosen so that we have $\gamma\geqslant f_y(x,y),\:\forall(x,y)\in[0,1]\times[0,1]\subset  [0,1]\times\mathbb{R}$.

\begin{center}
\begin{figure}[!h]
 \includegraphics[scale=.55]{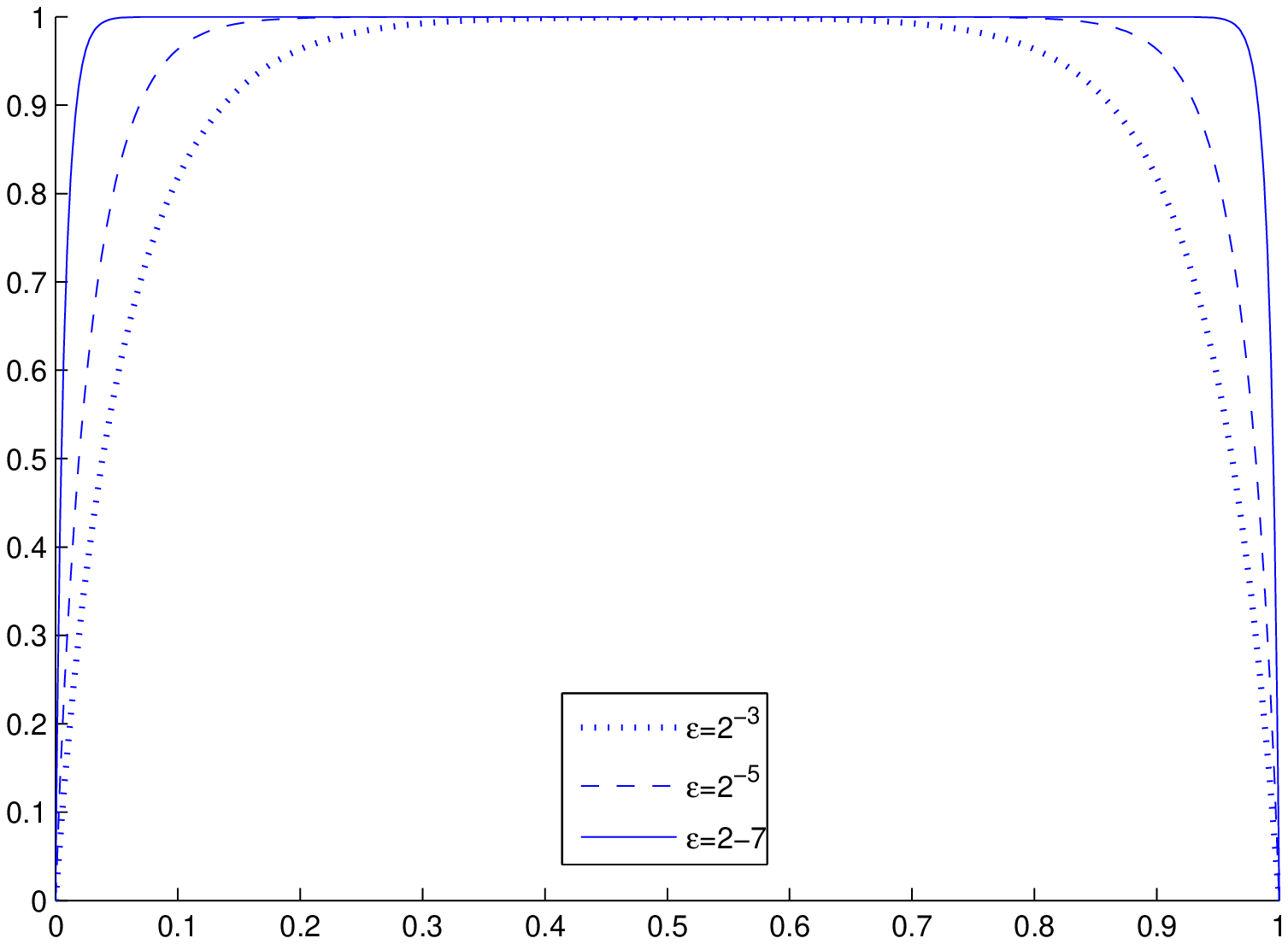}
 \caption{Numerical solution graphs from example \ref{primjer2} for values $\epsilon= 2^{-3},\,2^{-5},2^{-7}$.}
\end{figure}
\end{center}

\begin{table}[!h]\tiny
\centering
\begin{tabular}{c|cc|cc|cc|cc|cc}\hline
     $N$ &$E_n$&Ord&$E_n$&Ord&$E_n$&Ord&$E_n$&Ord&$E_n$&Ord\\\hline
$2^{6}$&$1.7568e-03$&$2.45$   &$3.0164e-03$&$1.98$ &$3.1822e-03$&$2.08$   &$4.6272e-03$&$2.08$  &$6.7583e-03$&$2.08$         \\
$2^{7}$&$4.6905e-04$&$2.33$   &$1.0375e-03$&$2.18$ &$1.0371e-03$&$2.17$   &$1.5081e-03$&$2.06$  &$2.2026e-03$&$2.04$         \\
$2^{8}$&$1.2733e-04$&$1.99$   &$3.0632e-04$&$2.24$ &$3.0792e-04$&$2.24$   &$4.7617e-04$&$2.09$  &$7.0331e-04$&$2.05$         \\
$2^{9}$&$4.0521e-05$&$2.00$   &$8.4422e-05$&$2.00$ &$8.4863e-05$&$2.00$   &$1.4306e-04$&$2.04$  &$2.1622e-04$&$2.02$         \\
$2^{10}$&$1.2507e-05$&$2.00$  &$2.6056e-05$&$2.00$ &$2.6192e-05$&$2.00$   &$4.3129e-05$&$2.00$  &$6.5955e-05$&$2.00$        \\
$2^{11}$&$3.7832e-06$&$2.00$  &$7.8820e-06$&$2.00$ &$7.9231e-06$&$2.00$   &$1.3046e-05$&$2.00$  &$1.9951e-05$&$2.00$         \\
$2^{12}$&$1.1256e-06 $&$2.00$ &$2.3451e-06 $&$2.00$ &$2.3573e-06$&$2.00$  &$3.8816e-06$&$2.00$  &$5.9356e-06$&$2.00$        \\
$2^{13}$&$3.3025e-07 $&$-$    &$6.8805e-07 $&$-$   &$6.9164e-07$&$-$      &$1.1389e-06$&$-$     &$1.7416e-06$&$-$         \\\hline
 $\epsilon$&\multicolumn{2}{c}{$2^{-3}$}&\multicolumn{2}{c}{$2^{-5}$}
            &\multicolumn{2}{c}{$2^{-7}$}&\multicolumn{2}{c}{$2^{-10}$}
             &\multicolumn{2}{c}{$2^{-15}$}\\\hline\hline
     $N$ &$E_n$&Ord&$E_n$&Ord&$E_n$&Ord&$E_n$&Ord&$E_n$&Ord\\\hline
$2^{6}$  &$6.7592e-03$&$2.08$ &$6.7592e-03$&$2.08$     &$6.7592e-03$&$2.08$ &$6.7592e-03$&$2.08$    &$6.8012e-03 $&$2.08 $   \\
$2^{7}$  &$2.2029e-03$&$2.04$ &$2.2029e-03$&$2.04$     &$2.2029e-03$&$2.04$ &$2.2029e-03$&$2.04$    &$2.2166e-03 $&$2.02 $   \\
$2^{8}$  &$7.0340e-04$&$2.05$ &$7.0340e-04$&$2.05$     &$7.0340e-04$&$2.05$ &$7.0340e-04$&$2.05$    &$7.1574e-03 $&$2.01 $   \\
$2^{9}$  &$2.1625e-04$&$2.02$ &$2.1625e-04$&$2.02$     &$2.1625e-04$&$2.02$ &$2.1625e-04$&$2.02$    &$2.2516e-04 $&$1.99 $   \\
$2^{10}$ &$6.5974e-05$&$2.00$ &$6.5974e-05$&$2.00$     &$6.5974e-05$&$2.00$ &$6.5974e-05$&$2.00$    &$6.9905e-05 $&$2.00 $   \\
$2^{11}$ &$1.9954e-05$&$2.00$ &$1.9954e-05$&$2.00$     &$1.9954e-05$&$2.00$ &$1.9954e-05$&$2.00$    &$2.1146e-05 $&$2.00 $   \\
$2^{12}$ &$5.9367e-06$&$2.00$ &$5.9367e-06$&$2.00$     &$5.9367e-06$&$2.00$ &$5.9367e-06$&$2.00$    &$6.2918e-06 $&$2.00 $   \\
$2^{13}$ &$1.7419e-06$&$-$    &$1.7419e-06$&$-$        &$1.7419e-06$&$-$    &$1.7419e-06$&$-$       &$1.8493e-06$&$-$      \\\hline
 $\epsilon$&\multicolumn{2}{c}{$2^{-25}$}
            &\multicolumn{2}{c}{$2^{-30}$}&\multicolumn{2}{c}{$2^{-35}$}
             &\multicolumn{2}{c}{$2^{-40}$}&\multicolumn{2}{c}{$2^{-45}$}\\\hline
\end{tabular}
\caption{Error $E_N$ and convergence rates Ord for approximate solution for example \ref{primjer2}.}
\end{table}
\end{example}

In the analysis of examples 1 and 2 from section \ref{sec5} and the corresponding result tables, we can observe the robustness of the constructed difference scheme, even for small values of the perturbation parameter $\epsilon$.
Note that the results presented in tables 1 and 2 already suggest $\epsilon$-uniform convergence of second order.

The presented method can be used in order to construct schemes of convergence order greater than two. In constructing such schemes, the corresponding analysis should not be more difficult that the analysis for our constructed difference scheme. In the case of constructing schemes for solving a two-dimensional singularly perturbed boundary value problem, if one does not take care that functions of two variables do not appear during the scheme construction, the analysis should not be substantially more difficult then for our constructed scheme. In such a case it would be enough to separate the expressions with the same variables and the analysis is reduced to the previously done one-dimensional analysis.

\begin{acknowledgements}
The authors are grateful to Nermin Oki\v ci\' c and Elvis Barakovi\' c for helpful advice.
Helena Zarin is supported by the Ministry of Education and Science of the Republic of Serbia under grant no. 174030.
\end{acknowledgements}


\end{document}